\DeclareMathOperator{\A}{A}
\DeclarePairedDelimiter{\accol}{\{}{\}}
\DeclareMathOperator{\B}{B}
\newcommand*{\CC}{\mathbb{C}}
\DeclareMathOperator{\CCM}{\mathsf{CM}}
\DeclarePairedDelimiter{\CCrochet}{\llbracket}{\rrbracket}
\newcommand*{\DEUX}{\mathrm{II}}
\DeclareMathOperator{\Diff}{D}
\newcommand*{\Dz}{\Diff_z}
\DeclareMathOperator{\E}{E}
\newcommand*{\ee}{\mathrm{e}}
\DeclareMathOperator{\F}{F}
\DeclareMathOperator{\FRC}{\mathsf{FRC}}
\newcommand*{\ic}{\mathrm{i}}
\DeclareMathOperator{\Id}{Id}
\DeclareMathOperator{\Ob}{Ob}
\DeclareMathOperator{\J}{J}
\newcommand*{\Mod}{\mathcal{M}}
\newcommand*{\N}{\mathbb{N}}
\DeclarePairedDelimiter{\Poch}{\langle}{\rangle}
\newcommand*{\pk}{\mathcal{H}}
\newcommand*{\QM}{\Mod^{\leq\infty}}
\DeclareMathOperator{\RC}{\mathsf{RC}}
\DeclareMathOperator{\Se}{Se}
\newcommand*{\sharpp}{\mathbin{\#}}
\DeclareMathOperator{\SRC}{\mathsf{SRC}}
\newcommand*{\sldz}{\mathrm{SL}(2,\Z)}
\newcommand*{\starp}{\mathbin{\star}}
\DeclareMathOperator{\T}{\mathsf{T}}
\newcommand*{\trH}[3]{\mathop{#1\vert_{\raisebox{-2pt}{\(\scriptscriptstyle #2\)}}}{#3}}
\newcommand*{\TROIS}{\mathrm{III}}
\newcommand*{\trJ}[3]{\mathop{#1\Vert_{\raisebox{-2pt}{\(\scriptscriptstyle #2\)}}}(#3)}
\newcommand*{\UN}{\mathrm{I}}
\newcommand*{\wJ}{\mathcal{J}}
\newcommand*{\wK}{\mathcal{K}}
\newcommand*{\wQ}{\mathcal{Q}}
\newcommand*{\Z}{\mathbb{Z}}
\newtheorem{theorem}{Theorem}[subsection]
\newtheorem{lemma}[theorem]{Lemma}
\newtheorem{proposition}[theorem]{Proposition}
\theoremstyle{definition}
\newtheorem{definition}[theorem]{Definition}
\newtheorem{remark}[theorem]{Remark}
\title[Deformations of Jacobi forms]{Formal deformations of the algebra of Jacobi forms and Rankin-Cohen brackets}
\author{YoungJu Choie}
\address[Y. ~Choie]{Pohang University of Science and Technology, Department of Mathematics, Pohang, Korea}
\email{yjc@postech.ac.kr}
\author{Fran\c cois Dumas}
\address[F.~Dumas]{Université Clermont Auvergne, CNRS, LMBP, F-63000 Clermont-Ferrand, France}
\email{Francois.Dumas@uca.fr}
\author{Fran\c cois Martin}
\address[F.~Martin]{Université Clermont Auvergne, CNRS, LMBP, F-63000 Clermont-Ferrand, France}
\email{Francois.Martin@uca.fr}
\author{Emmanuel Royer}
\address[E.~Royer]{Université Clermont Auvergne, CNRS, LMBP, F-63000 Clermont-Ferrand, France}
\email{Emmanuel.Royer@math.cnrs.fr}
\begin{document}
\begin{abstract}{This work is devoted to the algebraic and arithmetic properties of Rankin-Cohen brackets allowing to define and study them in several natural situations of number theory. It focuses on the property of these brackets to be formal deformations of the algebras on which they are defined, with related questions on restriction-extension methods. The general algebraic results developed here are applied to the study of formal deformations of the algebra of weak Jacobi forms and their relation with the Rankin-Cohen brackets on modular and quasimodular forms.}
\end{abstract}
\date\today
\keywords{Jacobi forms, quasimodular forms, Poisson brackets, Rankin-Cohen brackets, formal deformation, star product, Connes-Moscovici Theorem, Serre derivative}
\thanks{The first author is funded by grant NRF 2017R1A2B2001807 of the National Research Fund of Korea. The three other authors are partially funded by the project CAP 20--25.}
\subjclass[2010]{53D55,17B63,11F25,11F11,16W25}
\maketitle
\section*{Introduction} 

Appearing in the late 1950s in the study of modular forms, Rankin-Cohen brackets have undergone considerable development in many related fields, giving rise to a very abundant literature in recent decades. 
The initial problem was to construct bi-differential operators in two variables in such a way that their evaluation at modular forms is still a modular form. 
The preservation of this property of ${\rm SL}(2)$-equivariance was the main objective of the generalizations proposed for other algebras of functions of arithmetic origin 
(such as quasimodular forms and Jacobi forms, see for example \cite{MR2186573,MR1605899,MR1895295}) with respect to the appropriate arithmetical parameters (weight, depth, index).
It was also considered in various contexts related to Lie theory, representation theory or differential 
geometry, see for instance \cite{MR2325700, MR3477337,MR2271014,MR1967533}).
But there is a second fundamental aspect of the families of Rankin-Cohen brackets, namely the fact that they define formal associative deformations of the algebras considered and thus appear as 
alternative versions of the families of transvectants in the classical theory of invariants. This specific global property of Rankin-Cohen brackets is the focus of this paper. 
Number theory is the main framework of this work: both the motivations of the algebraization of the problem and the applications are related to modular, quasimodular and Jacobi forms.

The fact that Rankin-Cohen brackets define a formal deformation on modular forms is mentioned as a final remark by Eholzer in Zagier's article \cite{Z}. 
This fact encodes a large set of relations between the arithmetic functions build from the Fourier coefficients of the modular forms. Understanding this set of relations is indeed the very first motivation of the seminal work by Zagier~\cite{Z}  (see also \cite{MP1}). 
This property and the resulting links between Rankin-Cohen brackets and quantization procedures have given rise to many significant articles, among which we can cite 
Unterberger and Unterberger \cite{UU}, Cohen, Manin and Zagier \cite{MR1418868}, Connes and Moscovici \cite{MR2074985}, Bieliavsky, Tang and Yaol\cite{MR2319770}, with in Pevzner \cite{MP2} an enlightening perspective on their different points of view. 
It is impossible to give here complete references on such a vast subject, but it is necessary for our study to mention that the article \cite{MR2074985} gives as a corollary of more general results an explicit method to construct from any homogeneous derivation~\(D\) on a graded algebra \(A\) formal Rankin-Cohen brackets which give a deformation on \(A\); this type of brackets correspond to the notion of standard RC algebra in \cite{Z}. This general process cannot be applied directly to the algebra \(\Mod\) of modular forms because it is not stable by the complex derivative. That's why Zagier introduced in \cite{Z} (see also \cite{Z123}) a more subtle argument to define formal Rankin-Cohen brackets on the algebra \(\QM\) of quasimodular forms whose restriction to \(\Mod\) gives precisely the classical Rankin-Cohen brackets. We give in this paper a formalisation of this extension-restriction argument to the general framework of abstract differential algebras and use it to extend the classical Rankin-Cohen brackets on \(\Mod\) into a deformation of the algebra of weak Jacobi forms. 

The text is organized in five sections. The first is devoted to recalling the basic notions and results on formal deformations and to formulate some of their corollaries in terms adapted to our study; we specify in particular in Propositions~\ref{prop_FRC} and~\ref{prop_biFRC} the notion of formal Rankin-Cohen brackets associated with a homogeneous derivation of a graded or bigraded algebra.     

In the second section, we give two possible strategies to construct significant formal deformations on a graded algebra \(A\). The most direct one is to start with an homogeneous derivation \(D\) of \(A\) and to consider the associated formal Rankin-Cohen brackets. The second strategy is to embed \(A\) into an algebra \(R\) and consider a suitable derivation \(D\) of \(R\) which is not a derivation of \(A\), so that the formal Rankin-Cohen brackets on \(R\) associated to \(D\) restrict into a deformation of \(A\). This is the contents of Theorem~\ref{extrestrict}. In the particular case where \(A=\Mod\) we apply the first strategy taking for \(D\) the Serre derivative, and the second one with \(R=\QM\) and \(D\) the complex derivative. We prove in Proposition~\ref{notiso} that the two formal deformations of \(A\) obtained by these two approaches are not isomorphic. We develop these two strategies for weak Jacobi forms in the rest of the paper.

The third section gathers useful notions on the weak Jacobi forms for the full modular group, according to~\cite{MR781735}. The algebra of weak Jacobi forms \(\wJ\) is a polynomial algebra in four variables \(\E_4, \E_6, \A\) and \(\B\) over \(\CC\), bigraded by the weight and the index, containing \(\Mod=\CC [\E_4,\E_6]\) as a subalgebra. 

We define in the fourth section a family of derivations of \(\wJ\) extending the Serre derivation on \(\Mod\) from which we deduce in Theorem~\ref{firstfamily} a family parameterized by \(\CC^3\) of formal Rankin-Cohen brackets on \(\wJ\). We classify in Theorem~\ref{classification} these deformations of \(\wJ\) up to modular isomorphism.

The last section deals with the natural problem of extending the classical Rankin-Cohen brackets on \(\Mod\) to a deformation of \(\wJ\). To do this, we implement the extension-restriction method based on the Theorem~\ref{extrestrict}. Here the considered extension is not a polynomial extension as in the case of the embedding of \(\Mod\) in \(\QM\), but an extension by localization.  More precisely we introduce the Laurent polynomial algebra \(\wK=\CC[\E_4,\E_6,\A^{\pm 1},\B]\) which contains \(\wJ\) and also a copy \(\wQ=\CC[\E_4,\E_6,\F_2]\) of \(\QM\), where \(\F_2=\B\A^{-1}\) is a scalar multiple of the Weierstra\ss{} function. We define in Theorem~\ref{thm3} a family parameterized by \(\CC^2\) of formal Rankin-Cohen brackets on \(\wK\) whose restriction to \(\Mod\) are the classical Rankin-Cohen brackets on \(\Mod\), and determine in Theorem~\ref{thm4} the values of the parameters for which these brackets give deformations of \(\wJ\).

\section{Algebraic results on formal deformations}\label{firstsection}
\subsection{Formal deformations}
In this section, we recall the basic properties of formal deformations and their isomorphisms. Our main reference on this subject is~\cite[Chapter 13]{MR2906391}. 
\subsubsection{Definition and first properties}\label{eq_deffirstprop}
For any commutative \(\CC\)-algebra \(A\), let  \(A[[\hbar]]\) be the commutative algebra of formal power series in one variable \(\hbar\) with coefficients in \(A\). A \emph{formal deformation} of \(A\) is a family \((\mu_j)_{j\geq 0}\) of bilinear maps \(\mu_j\colon A\times A\to A\) such that \(\mu_0\) is the product of \(A\) and such that the (non commutative) product on \(A[[\hbar]]\) defined by extension of
\begin{equation}\label{star}
\forall (f,g)\in A^2\qquad f\starp g=\sum_{j\geq 0}\mu_j(f,g)\hbar^j
\end{equation}
is associative. This associativity is reflected in
\begin{equation}\label{associativity}
\forall n\geq 0\quad \forall(f,g,h)\in A^3\qquad\sum_{r=0}^n\mu_{n-r}(\mu_r(f,g),h)=\sum_{r=0}^n\mu_{n-r}(f,\mu_r(g,h)).
\end{equation} 
If \((\mu_j)_{j\geq 0}\) is a formal deformation of \(A\),  if \(\mu_1\) is  skew-symmetric and \(\mu_2\) is symmetric, then \(\mu_1\) is a Poisson bracket on \(A\).
\subsubsection{Isomorphic formal deformations}\label{equivfd}
Let \((\mu_j)_{j\geq 0}\) and  \((\mu'_j)_{j\geq 0}\) be two formal deformations of \(A\).  They are \emph{isomorphic} if there exists a \(\CC\)-linear bijective map \(\phi\colon A\to A\) such that
\begin{equation}\label{equivfdid}
\forall j\geq 0\quad \forall(f,g)\in A^2\qquad\phi(\mu_j(f,g))=\mu'_j(\phi(f),\phi(g)).%
\end{equation}
Assume that \(\mu_1\) is  skew-symmetric and \(\mu_2\) is symmetric. Formula~\eqref{equivfdid} for \(j=0\) and \(j=1\) implies, in particular, that \(\phi\) is an automorphism of the Poisson algebra \((A,\mu_1)\). We denote by \(\starp\) and \(\sharpp\) the products on \(A[[\hbar]]\) respectively associated to the formal deformations \((\mu_j)_{j\geq 0}\) and  \((\mu'_j)_{j\geq 0}\). The \(\CC[[\hbar]]\)-linear extension \(\phi\colon A[[\hbar]]\to A[[\hbar]]\) satisfies
\begin{equation}
\forall(f,g)\in A^2\qquad\phi(f\starp g)=\phi(f)\sharpp\phi(g). 
\end{equation}
\subsubsection{Example}\label{exfd}
It is well known that, if \(d\)  and \(\delta\) are two \(\CC\)-derivations of \(A\) satisfying \(\delta d=d\delta\), then the sequence \((\T_n^{d,\delta})_{n\geq 0}\) of formal transvectants \(\T_n^{d,\delta}\colon A\times A\to A\) defined 
for any \(f,g\in A\) by
\begin{equation}\label{defT}
\T_n^{d,\delta}(f,g)=\sum_{r=0}^{n}\frac{(-1)^r}{r!(n-r)!}d^{n-r}\delta^{r}(f)d^{r}\delta^{n-r}(g)
\end{equation} 
is a formal deformation of \(A\). The next paragraph is devoted to the more complicated situation where the two derivations don't commute but generate the two dimensional non abelian Lie algebra.
\subsection{Connes-Moscovici deformations}\label{CM}
We recall in the following proposition a particular case of a theorem due to Connes \& Moscovici which provides a general method for constructing formal deformations. 
\begin{definition}\label{def_CM}
Let \(A\) a commutative \(\CC\)-algebra, and \(\Delta\) and \(D\) two \(\CC\)-derivations of \(A\) satisfying 
\begin{equation}\Delta D-D\Delta=D.\end{equation}
The Connes-Moscovici deformation on \(A\) associated to \((D,\Delta)\) is the sequence  \((\CCM_n^{D,\Delta})_{n\geq 0}\) of bilinear maps \(A\times A\to A\) defined for any \(f,g\in A\) by
\begin{equation}\label{defCM}
\CCM_n^{D,\Delta}(f,g)= \sum_{r=0}^{n}\frac{(-1)^r}{r!(n-r)!}D^{r}(2\Delta+r)^{\Poch{n-r}}(f)D^{n-r}(2\Delta+n-r)^{\Poch{r}}(g),
\end{equation}
with convention \(1=\Id_A\) and for any function \(F\colon A\to A\) the Pochhammer notation:
\begin{equation}\label{poch}
F^{\Poch{0}}=1  \quad\text{and}\quad F^{\Poch{m}}=F\,(F+1)\,\cdots(F+m-1) \ \text{ for any \(m\geq 1\)}.
\end{equation}
\end{definition}
\begin{proposition}\label{prop_CM}
Let \(D\) and \(\Delta\) be two derivations on \(A\) such that \(\Delta D-D\Delta=D\).
Then, \((\CCM_n^{D,\Delta})_{n\geq 0}\) is a formal deformation of \(A\). 
\end{proposition}
\begin{proof}
See \cite[eq. (1.5)]{MR2074985}, also \cite{Yao} and \cite{MR2319770}.
\end{proof}  
The relationship between Connes-Moscovici deformations and transvectants has been examined in different works, see for example \cite [Section II.2.C]{Yao}, \cite[Section 3]{MR2319770}, \cite{Ovs} and \cite{OMMY}.
\subsection{Formal Rankin-Cohen brackets}\label{FRC}
Applying the above general result to graded situations gives rise to the following notion of formal Rankin-Cohen brackets.
\begin{proposition}\label{prop_FRC}
Let \(A=\bigoplus_{k\in \N} A_k\) be a graded commutative \(\CC\)-algebra, and \(D\) a derivation of \(A\) such that \(D(A_k)\subset A_{k+2}\) for any  \(k\geq 0\). Let us consider the sequence \((\FRC^D_n)_{n\geq 0}\) of bilinear maps \(A\times A\to A\) defined by bilinear extension of
 \begin{equation}\label{FRCB}
 \FRC^D_n(f,g)=\sum_{r=0}^n(-1)^r\begin{binom}{k+n-1}{n-r}
\end{binom}\begin{binom}{\ell+n-1}{r}
\end{binom}D^r(f)D^{n-r}(g),
\end{equation}
for any \(f\in A_k, \ g\in A_\ell\). Then,
\begin{enumerate}[label=\textnormal{(\roman*)}]
 \item\label{item_un} \((\FRC_n^{D})_{n\geq 0}\) is a formal deformation of \(A\).
 \item\label{item_deux}  \(\FRC^D_n(A_k,A_\ell)\subset A_{k+\ell+2n}\) for all \(n,k,\ell\geq 0\).
 \item\label{item_trois} The associated Poisson bracket is defined by bilinear extension of
\[\FRC^D_1(f,g)=kfD(g)-\ell gD(f) \text{ for all \(f\in A_k,g\in A_\ell\).}\]
\end{enumerate}
\end{proposition}
\begin{proof}
The linear map \(\Delta\colon A\to A\) defined on each homogeneous component by \(\Delta(f)=\frac12kf\) for any \(f\in A_k\) is a derivation of \(A\) which satisfies \(\Delta D-D\Delta=D\). We compute \((2\Delta+r)^{\Poch{n-r}}(f)=\frac{(k+n-1)!}{(k+r-1)!}f\) and \((2\Delta+n-r)^{\Poch{r}}(g)=\frac{(\ell+n-1)!}{(\ell+n-r-1)!}g\).
So
\begin{equation}\label{proofFRC}
\forall n\geq 0,\quad\CCM^{D,\Delta}_n=\FRC^D_n     
\end{equation}
and \ref{item_un} follows from Proposition \ref{prop_CM}. Points~\ref{item_deux} and~\ref{item_trois} are obvious by construction.
\end{proof}

\begin{definition}\label{def_FRC}
The deformation \((\FRC^D_n)_{n\geq 0}\) of \(A\) defined in Proposition~\ref{prop_FRC} is called the sequence of formal Rankin-Cohen brackets on \(A\) associated to \(D\).
\end{definition}

The construction corresponding to Definition~\ref{def_FRC} appears in \cite[Section 5]{Z}; the same article mentions (in a note added in proof) a remark by Eholzer on the fact that it defines a formal deformation. We will need in Section 3 the following result, which is a parameterized version of the Proposition~\ref{prop_FRC} for bigraded algebras.

\begin{proposition}\label{prop_biFRC}
Let \(A=\bigoplus_{k,p\in\Z} A_{k,p}\) be a bigraded commutative \(\CC\)-algebra, and \(D\) a derivation of \(A\) such that \(D(A_{k,p})\subset A_{k+2,p}\) for any \(k,p\in\Z\). For any \(\mu\in\CC\), let us consider the sequence \((\FRC^{D,\mu}_n)_{n\geq 0}\) of bilinear maps \(A\times A\to A\) defined by bilinear extension of
 \begin{equation}\label{biFRCB}
 \FRC^{D,\mu}_n(f,g)=\sum_{r=0}^n(-1)^r\begin{binom}{k+\mu p+n-1}{n-r}
\end{binom}\begin{binom}{\ell+\mu q+n-1}{r}
\end{binom}D^r(f)D^{n-r}(g),
\end{equation}
for any \(f\in A_{k,p}, \ g\in A_{\ell,q}\). Then,
\begin{enumerate}[label=\textnormal{(\roman*)}]
\item \((\FRC_n^{D,\mu})_{n\geq 0}\) is a formal deformation of \(A\).
\item \(\FRC^{D,\mu}_n(A_{k,p},A_{\ell,q})\subset A_{k+\ell+2n,p+q}\) for all \(n\in \N\), \(k,\ell,p,q\in\Z\).
\item The associated Poisson bracket is defined by bilinear extension of
\[
 \FRC^{D,\mu}_1(f,g)=(k+\mu p)fD(g)-(\ell+\mu q) gD(f) \text{ for all \(f\in A_{k,p},g\in A_{\ell,q}\).}
 \]
 \end{enumerate}
 \end{proposition}
\begin{proof} 
For any \(\mu\in\CC\), we introduce the weighted Euler derivation \(\Delta_\mu\) of \(A\) defined by linear extension of
\(\Delta_\mu(f)=\frac12(k+\mu p)f\) for all \((k,p)\in\Z^2\) and \(f\in A_{k,p}\). It satisfies \(\Delta_\mu D-D \Delta_\mu= D\). Next, we apply Proposition~\ref{prop_CM} and similar calculations to those in the proof of Proposition~\ref{prop_FRC} 
prove that \(\CCM^{D,\Delta_\mu}_n=\FRC^{D,\mu}_n  \) and give the results.
\end{proof}
\section{An extension-restriction method on formal deformations and Rankin-Cohen brackets on modular forms revisited}

\subsection{Original problem}\label{originalpb}
The classical Rankin-Cohen brackets were originally developed for modular forms and have since been used in a wide range of literature and applications. We refer for example to~\cite{Z123} as a primary reference. Recall that, if \(f\) and \(g\) are modular forms of respective weights \(k\) and \(\ell\), the differential polynomial
\begin{equation}\label{classicalRC}
\RC_n(f,g)=\sum_{r=0}^n(-1)^r\begin{binom}{k+n-1}{n-r}
\end{binom}\begin{binom}{\ell+n-1}{r}
\end{binom}\Dz^r(f)\Dz^{n-r}(g)
\end{equation}
is a modular form of weight \(k+\ell+2n\). Here \(\Dz\) is the usual normalized derivation \(\frac{1}{2i\pi}\partial_z\) related to the complex variable \(z\).

It is important to observe that, since the algebra \(\Mod\) of modular forms is not stable by this derivation (the derivative of a modular form is not a modular form), the property of the sequence of classical Rankin-Cohen brackets defined by~\eqref{classicalRC} to be a formal deformation of \(\Mod\) cannot be obtained by direct application of Proposition~\ref{prop_FRC}.
Zagier has developed in \cite{Z} an argument to overcome this difficulty (see Paragraph~\ref{thecaseMF} below). We extend it in the following theorem to the general formal framework of Connes-Moscovici deformations.

\subsection{A general argument about the restriction of some formal deformations}
Our goal is to prove the following theorem.
\begin{theorem}\label{extrestrict}
Consider a commutative \(\CC\)-algebra \(R\) and a subalgebra \(A\) of \(R\). Let \(\Delta\) and \(\theta\) be two \(\CC\)-derivations of \(R\) such that \(\Delta\theta-\theta\Delta=\theta\).
We assume that
\begin{enumerate}[label=\textnormal{(\roman*)}]
\item \(\Delta(A)\subseteq A\) and \(\theta(A)\subseteq A\);
\item there exists \(h\in A\) such as \(\Delta(h)=2h\);
\item there exists \(x\in R, x \notin A\) such that \(\Delta(x)=x\) and \(\theta(x)=-x^2+h\).
\end{enumerate}
Then, the derivation \(D\coloneqq\theta+2x\Delta\) of \(R\) satisfies \(\Delta D-D\Delta=D\) and the Connes-Moscovici deformation \((\CCM_n^{D,\Delta})_{n\geq 0}\) of \(R\) defines by restriction to \(A\) a formal deformation of \(A\). \end{theorem}

An obvious calculation shows that \(\Delta D-D\Delta=D\). We consider in \(R\) the Connes-Moscovici deformation \((\CCM_n^{D,\Delta})_{n\geq 0}\) defined by~\eqref{defCM}. The proof of the theorem is based on the following two lemmas.

\begin{lemma}\label{lemma_itertheta}
The assumptions and notations are those of Theorem~\ref{extrestrict}. We introduce for any integer \(n\geq 0\) the linear map \(\theta^{[n]}\colon R\to R\) defined by
\begin{equation}\label{deftheta}
\theta^{[n]}=\sum_{\ell=0}^n(-1)^{n-\ell}\binom{n}{\ell}x^{n-\ell}D^\ell(2\Delta+\ell)^{\Poch{n-\ell}}.
\end{equation} 
Then, for all \(n\geq 1\), we have,
\begin{enumerate}[label=\textnormal{(\roman*)}]
\item\label{item_uno} \(\theta^{[n+1]}=\theta\theta^{[n]}+nh\theta^{[n-1]}(2\Delta+n-1)\).
\item\label{item_due} \(\theta^{[n]}(A)\subseteq A\).
\end{enumerate}
\end{lemma}

\begin{proof}
\parindent=0cm
We directly check that \(\theta^{[1]}=\theta\) and \(\theta^{[2]}=\theta^2+2h\Delta\) which shows \ref{item_uno} for \(n=1\).
We then proceed by induction. It follows from \(\theta(x)=-x^2+h\) that
\[\theta\theta^{[n]}=\UN+\DEUX+h\cdot\TROIS\]
with notations
\begin{center}
{\renewcommand{\arraystretch}{1.6}
\(\left\{\begin{matrix}
\UN=\sum_{\ell}(-1)^{n+1-\ell}\binom{n}{\ell}(n-\ell)x^{n-\ell+1}D^\ell(2\Delta+\ell)^{\Poch{n-\ell}},\hfill \\
\DEUX=\sum_{\ell}(-1)^{n-\ell}\binom{n}{\ell}x^{n-\ell}\theta D^\ell(2\Delta+\ell)^{\Poch{n-\ell}},\hfill\\
\TROIS=\sum_{\ell}(-1)^{n-\ell}\binom{n}{\ell}(n-\ell)x^{n-\ell-1}D^\ell(2\Delta+\ell)^{\Poch{n-\ell}}.\hfill
\end{matrix}\right.\)}
\end{center}
We have
\[\TROIS=-n\sum_{\ell}(-1)^{n-1-\ell}\binom{n-1}{\ell}x^{n-1-\ell}D^\ell(2\Delta+\ell)^{\Poch{n-1-\ell}}(2\Delta+n-1)\]
so
\[\theta\theta^{[n]}+nh\theta^{[n-1]}(2\Delta+n-1)=\UN+\DEUX.\]
We replace \(\theta\) by \(D-2x\Delta\) in \(\DEUX\) to obtain \(\DEUX=\DEUX_1+\DEUX_2\) with
\begin{center}
{\renewcommand{\arraystretch}{1.6}
\(\left\{\begin{matrix}
\DEUX_1=-\sum_{\ell}(-1)^{n-\ell}\binom{n}{\ell-1}x^{n+1-\ell}D^\ell(2\Delta+\ell-1)^{\Poch{n+1-\ell}}\hfill \\
\DEUX_2=-2\sum_{\ell}(-1)^{n-\ell}\binom{n}{\ell}x^{n+1-\ell}\Delta D^\ell(2\Delta+\ell)^{\Poch{n-\ell}}.\hfill
\end{matrix}\right.\)}
\end{center}
We replace \(\Delta D^\ell\) by \(D^\ell\Delta+\ell D^\ell\) to write \(\DEUX_2=\DEUX_{21}+\DEUX_{22}\) with %
\begin{center}
{\renewcommand{\arraystretch}{1.6}
\(\left\{\begin{matrix}
\DEUX_{22}=-2\sum_{\ell}(-1)^{n-\ell}\binom{n}{\ell}\ell x^{n+1-\ell}D^\ell(2\Delta+\ell)^{\Poch{n-\ell}}\hfill\\
\DEUX_{21}=-2\sum_{\ell}(-1)^{n-\ell}\binom{n}{\ell}x^{n+1-\ell}D^\ell\Delta(2\Delta+\ell)^{\Poch{n-\ell}}.\hfill
\end{matrix}\right.\)}
\end{center}
By \(2\Delta=2\Delta+(\ell-1)-(\ell-1)\) we finally find \(\DEUX_{21}=\DEUX_{211}+\DEUX_{212}\) with%
\begin{center}
{\renewcommand{\arraystretch}{1.6}
\(\left\{\begin{matrix}
\DEUX_{211}=-\sum_{\ell}(-1)^{n-\ell}\binom{n}{\ell}x^{n+1-\ell}D^\ell(2\Delta+\ell-1)^{\Poch{n+1-\ell}}\hfill\\
\DEUX_{212}=\sum_{\ell}(-1)^{n-\ell}\binom{n}{\ell}(\ell-1)x^{n+1-\ell}D^\ell(2\Delta+\ell)^{\Poch{n-\ell}}.\hfill
\end{matrix}\right.\)}
\end{center}
So we obtain
\[%
\DEUX_1+\DEUX_{211}=\sum_{\ell}(-1)^{n+1-\ell}\binom{n+1}{\ell}x^{n+1-\ell}D^\ell(2\Delta+\ell-1)^{\Poch{n+1-\ell}}
\]
and
\[
\UN+\DEUX_{22}+\DEUX_{212}=\sum_{\ell}(-1)^{n+1-\ell}\binom{n+1}{\ell}(n+1-\ell)x^{n+1-\ell}D^\ell(2\Delta+\ell)^{\Poch{n-\ell}}.
\]
Then, we compute
\begin{align*}
(2\Delta&+\ell-1)^{\Poch{n+1-\ell}}+(n+1-\ell)(2\Delta+\ell)^{\Poch{n-\ell}}\\
&= (2\Delta+\ell-1)(2\Delta+\ell)^{\Poch{n-\ell}}+(n+1-\ell)(2\Delta+\ell)^{\Poch{n-\ell}}\\
&=(2\Delta+n)(2\Delta+\ell)^{\Poch{n-\ell}}\\
&=(2\Delta+\ell)^{\Poch{n+1-\ell}}.
\end{align*}
We conclude%
\[%
\UN+\DEUX=\sum_{\ell}(-1)^{n+1-\ell}\binom{n+1}{\ell}x^{n+1-\ell}D^\ell(2\Delta+\ell)^{\Poch{n+1-\ell}},
\]
which completes the proof of~\ref{item_uno} of the lemma. As a consequence, we get~\ref{item_due}.
\end{proof}

\begin{lemma}\label{equalitylemma}
The assumptions and notations are those of Theorem~\ref{extrestrict}. We introduce, for any integer \(n\geq 0\), the bilinear map \(\Theta_n: R\times R\to R\) defined by
\begin{equation}\label{eq_TCM}%
\Theta_n(f,g)=\sum_{r=0}^n\frac{(-1)^r}{r!(n-r)!}\left(\theta^{[r]}(2\Delta+r)^{\Poch{n-r}}\right)(f)\left(\theta^{[n-r]}(2\Delta+n-r)^{\Poch{r}}\right)(g)
\end{equation}
for all \((f,g)\in R^2\). Then, we have \((\CCM_n^{D,\Delta})_{n\geq 0}=(\Theta_n)_{n\geq 0}\)
.\end{lemma}

\begin{proof}
We express \(\Theta_n\) depending on  \(D\) using ~\eqref{deftheta}, \eqref{eq_TCM} and Lemma~\ref {lemma_itertheta}. We find%
\begin{multline*}%
\Theta_n(f,g)=\sum_{r,\ell,t}\frac{(-1)^{n+r-t-\ell}}{r!(n-r)!}\binom{r}{\ell}\binom{n-r}{t}x^{n-t-\ell}\\\times D^\ell(2\Delta+\ell)^{\Poch{r-\ell}}(2\Delta+r)^{\Poch{n-r}}(f)D^t(2\Delta+t)^{\Poch{n-r-t}}(2\Delta+n-r)^{\Poch{r}}(g).
\end{multline*}%
Using \((2\Delta+\ell)^{\Poch{r-\ell}}(2\Delta+r)^{\Poch{n-r}}=(2\Delta+\ell)^{\Poch{n-\ell}}\)and \((2\Delta+t)^{\Poch{n-r-t}}(2\Delta+n-r)^{\Poch{r}}=(2\Delta+t)^{\Poch{n-t}}\),   
we obtain%
\begin{multline*}%
\Theta_n(f,g)=\\\sum_{\ell,t}\frac{(-1)^{n-t}}{(n-t-\ell)!\ell! t!}D^\ell(2\Delta+\ell)^{\Poch{n-\ell}}(f)D^{t}(2\Delta+t)^{\Poch{n-t}}(g)x^{n-t-\ell}\sum_{r}(-1)^{r-\ell}\binom{n-t-\ell}{r-\ell}.
\end{multline*}
The inner sum indexed by  \(r\) is equal to \((1-1)^{n-t-\ell}\), so we obtain%
\[%
\Theta_n(f,g)=\sum_{\ell}\frac{(-1)^\ell}{\ell!(n-\ell)!}D^\ell(2\Delta+\ell)^{\Poch{n-\ell}}(f)D^{n-\ell}(2\Delta+n-\ell)^{\Poch{\ell}}(g)=\CCM_n^{D,\Delta}(f,g),
\] and the proof is complete.
\end{proof}

\begin{proof}[\proofname{} of Theorem~\ref{extrestrict}]
We know from Proposition~\ref{prop_CM} that \((\CCM_n^{D,\Delta})_{n\geq 0}\) is a formal deformation of \(R\) but it is not clear that \(\CCM_n^{D,\Delta}(f,g)\in A\) for \(f,g\in A\). The sequence \((\Theta_n)_{n\geq 0}\) satisfies, by construction, that \(\Theta_n(f,g)\in A\) for \(f,g\in A\) but it is not clear from its definition that it is a formal deformation of \(R\). Thus, Theorem~\ref{extrestrict} follows from the equality \(\CCM_n^{D,\Delta}=\Theta_n\) proved in Lemma~\ref{equalitylemma}. Let us observe that the subalgebra \(A\) of \(R\) is stable by \(\Delta\), is not necessarily stable by \(D\), but is stable by any bilinear application  \(\CCM_n^{D,\Delta}\).
\end{proof}

\subsubsection{Extension-restriction method}\label{ERMethod}
Theorem~\ref{extrestrict} is formulated in terms of restriction from an algebra \(R\) to a subalgebra \(A\). In the following number-theoretical applications, it will be applied in terms of extension from \(A\) to \(R\) either by polynomial extension (see paragraph~\ref{fmftqmf} for modular forms) or by localization (see Section~\ref{secondRC} for weak Jacobi forms).

\subsection{Application to modular forms}\label{thecaseMF}
A reference for details on this section is~\cite{MR2186573}.
\subsubsection{Reminder and notations on modular and quasimodular forms}
It is well known that the \(\CC\)-algebra of holomorphic modular forms \(\Mod\) associated to the full modular group \(\sldz\) is the weight-graded polynomial algebra 
\begin{equation}\label{M*}
\Mod=\CC[\E_4,\E_6]=\bigoplus_{k\in 2\Z_{\geq 0},\,k\not=2}\!\!\!\!\Mod_k \quad \text{with } \  \Mod_k=\!\!\!\bigoplus_{4i+6j=k}\CC\E_4^i\E_6^j
\end{equation}
where \(\E_4\) and \(\E_6\) are the Eisenstein series of respective weights 4 and 6. The Eisenstein series \(\E_2\) is not a modular form but a quasimodular form (of weight 2 and depth 1) and the algebra \(\QM\) of quasimodular forms can be described as the polynomial algebra
\begin{equation}\label{QM_def}
\QM=\Mod[\E_2]=\CC[\E_4,\E_6,\E_2]=\bigoplus_{k\in 2\Z_{\geq 0}}\bigoplus_{s=0}^{k/2}\Mod_{k-2s}\E_2^s
\end{equation}
graded by the weight \(k\) and filtered by the depth \(s\) (corresponding to the degree in \(\E_2\)). 

We denote by \(\Dz\) the normalized complex derivative \(\Dz=\frac1{2i\pi}\partial_z\). Ramanujan relations are
\begin{equation}\label{Ramanujan}
\Dz(\E_4)=-\frac{1}{3}(\E_6-\E_4\E_2),\quad\Dz(\E_6)=-\frac{1}{2}(\E_4^2-\E_6\E_2),\quad\Dz(\E_2)=-\frac{1}{12}(\E_4-\E_2^2)
\end{equation}
In particular the subalgebra \(\Mod\) of \(\QM\) is not stable by the derivation \(\Dz\) of \(\QM\). We introduce the Serre derivative, which is the derivation \(\Se\) of \(\QM\) defined by linear extension of
\begin{equation}\label{serredef}
\Se(f)=\Dz(f)-\frac k{12}\E_2f \hspace{0.5cm} \text{for any \(f\in\QM\) of weight \(k\).}
\end{equation}
We have
\begin{equation}\label{Serrel}
\Se(\E_4)=-\frac13\E_6, \quad \Se(\E_6)=-\frac12\E_4^2,\quad \Se(\E_2)=-\frac1{12}(\E_2^2+\E_4).
\end{equation}
In particular \(\Mod\) is stable by \(\Se\) and the restriction of \(\Se\) to \(\Mod=\CC[\E_4, \E_6]\) is the derivative \(\theta=-\frac13\E_6\partial_{\E_4}-\frac12\E_4^2\partial_{\E_6}\).

\subsubsection{Application of the extension-restriction method to modular forms}\label{fmftqmf}
We apply here the general result of Theorem~\ref{extrestrict} to give another proof of the following well-known result. 
\begin{proposition}
 The sequence of classical Rankin-Cohen brackets \eqref{classicalRC} defines a formal deformation of the algebra \(\Mod\).
\end{proposition}
\begin{proof}
We choose for \(A\) the algebra of modular forms \(\Mod=\CC[\E_4,\E_6]\), for \(\theta\) the restriction to \(\Mod\) of the Serre derivative \(\Se\) and for \(\Delta\) the weight-derivative defined by \(\Delta(f)=\frac{k}{2}f\) for any \(f\in \Mod_k\). We have \(\Delta\theta-\theta\Delta=\theta\). We introduce%
\[%
 h=-\frac{1}{144}\E_4\in\Mod \quad\text{and}\quad x=\frac{1}{12}\E_2\in\QM.
\]
We consider the polynomial extension \(R=\Mod[x]\), which is, by \eqref{QM_def}, the algebra of quasimodular forms \(\QM\).  We extend \(\Delta\) and \(\theta\) to \(\QM\) by \(\Delta(x)=x\) and \(\theta(x)=-x^2+h\). In other words %
\[%
\Delta(\E_2)=\E_2 \ \text{ and } \
\theta\left(\E_2\right)=-\frac{1}{12}\E_2^2-\frac{1}{12}\E_4.
\]
Then, the extensions \(\Delta\) and \(\theta\) are the derivations of \(\QM\) such that
\[\Delta(f)=\frac{k}{2}f \text{ for any } f\in \QM \text{ of weight } k,\ \text{and}\ \theta=\Se \text{ on } \QM,\]
and they also satisfy \(\Delta\theta-\theta\Delta=\theta\). By \eqref{serredef} the derivation \(\theta+2x\Delta\) of \(\QM\) is equal to the derivative \(\Dz\). Applying Theorem~\ref{extrestrict}, we deduce that the formal deformation \((\CCM^{\Dz,\Delta})_{n\geq 0}\) of \(\QM\) defines by restriction a formal deformation of \(\Mod\). On the one hand \(\CCM^{\Dz,\Delta}_{n}=\FRC^{\Dz}_n\) on \(\QM\) by~\eqref{proofFRC}. On the other hand, the restriction of \(\FRC^{\Dz}_n\) to \(\Mod\) is the classical Rankin-Cohen bracket \(\RC_n\) defined by \eqref{classicalRC}. So we get a proof of the property that \((\RC_n)_{n\geq 0}\) is a formal deformation of \(\Mod\).
\end{proof}

As observed at the end of the proof of Theorem~\ref{extrestrict}, the algebra \(\Mod\) is stable by \(\Delta\), is not stable by \(\Dz\), but is stable by any Rankin-Cohen brackets \(\FRC^{\Dz}_n=\CCM_n^{\Dz,\Delta}\).

\subsubsection{Serre Rankin-Cohen brackets}\label{SeRCb}
Another strategy to overcome the fact that \(\Mod\) is not stable by the derivation \(\Dz\) is to change the derivation and apply directly Proposition~\ref{prop_FRC} to a weight 2 homogeneous derivation of \(\Mod\), for instance the Serre derivation \(\Se\) on \(\Mod\). So we define with~\eqref{FRCB} the Serre-Rankin-Cohen brackets \((\SRC_n)_{n\geq 0}=(\FRC_n^{\Se})_{n\geq 0}\) on \(\Mod\) by bilinear extension of
 \begin{equation}\label{SRCFM}
 \SRC_n(f,g)=\sum_{r=0}^n(-1)^r\begin{binom}{k+n-1}{n-r}
\end{binom}\begin{binom}{\ell+n-1}{r}\end{binom}\Se^r(f)\Se^{n-r}(g)
\end{equation} 
for \((f,g)\in\Mod_k\times\Mod_\ell\).
This is by Proposition~\ref{prop_FRC} a formal deformation of \(\Mod\) satisfying 
\begin{equation}
\forall n\geq 0\quad\SRC_n(\Mod_k,\Mod_\ell )\subset\Mod_{k+\ell+2n}.
\end{equation}
It follows from the definition that \(\SRC_0=\RC_0\) and \(\SRC_1=\RC_1\) on \(\Mod\). The following proposition clarifies the relationship between the Serre-Rankin-Cohen brackets and the usual Rankin-Cohen brackets~\eqref{classicalRC}.

\begin{proposition}\label{notiso}
The two formal deformations \((\RC_n)_{n\geq 0}\) and \((\SRC_n)_{n\geq 0}\) of \(\Mod\) are not isomorphic. 
\end{proposition}
\begin{proof}
If \(\varphi\) is an isomorphism between the formal deformations \(\left(\Mod,\left(\SRC\right)_{n\geq 0}\right)\) and \(\left(\Mod,\left(\RC\right)_{n\geq 0}\right)\) then it is a Poisson automorphism of \((\Mod,\RC_1)\) since \(\SRC_1=\RC_1\). Then \(\varphi=\text{id}_{\Mod}$ by Proposition 7 of~\cite{DR}. This is contradicted by
\begin{equation}\label{formuleFM}
\SRC_2(f,g)=\RC_2(f,g)+\frac1{288}k\ell(k+\ell+2)fg\E_4 \quad \text{for }(f,g)\in\Mod_k\times\Mod_\ell.
\end{equation}
\end{proof}

\subsubsection{Formal deformations on quasimodular forms}%
Using \eqref{Serrel}, the formal Rankin-Cohen brackets \((\SRC_n)_{n\geq 0}\) on \(\Mod\) defined by \eqref{SRCFM} extend canonically to \(\QM\). Many other formal deformations of \(\QM\) can be constructed using Proposition~\ref{prop_CM} and the systematic study of such analogues of Rankin-Cohen brackets on \(\QM\) is the subject of the article \cite{DR} under the additional arithmetical assumption of depth conservation. It is not satisfied by \((\SRC_n)_{n\geq 0}\) because for instance \(\SRC_1(\E_2,\E_4)=\frac13(\E_4\E_2^2-2\E_6\E_2+\E_4^2)\) is of depth 2. However, the deformations of \(\QM\) appearing in Theorems B and D of \cite{DR} will play some role in the study of the deformations on the Jacobi forms in the following section.
\section{The algebra of weak Jacobi forms}

\subsection{The notion of weak Jacobi form}\label{sec_notionofwJf}
The aim of this part is to gather the notions we shall need on weak Jacobi forms. The main reference is~\cite{MR781735}.
Let \(\pk\) be the upper half plane, \(k\) an integer and \(m\) a nonnegative integer. The Jacobi group \(\sldz^J=\sldz\ltimes\Z^2\) acts on functions \(f\colon\pk\times\CC\to\CC\) as follows: if \(\gamma=\bigl(\begin{smallmatrix}a & b\\c & d\end{smallmatrix}\bigr)\in\sldz\) and \((\lambda,\mu)\in\Z^2\), then%
\begin{multline*}%
\trJ{f}{k,m}{\left(\gamma,(\lambda,\mu)\right)}(\tau,z)=\\(c\tau+d)^{-k}\exp\!\left(2\pi\ic m\left(-\frac{c(z+\lambda\tau+\mu)^2}{c\tau+d}+\lambda^2\tau+2\lambda z+\lambda\mu\right)\right)f\left(\frac{a\tau+b}{c\tau+d},\frac{z+\lambda\tau+\mu}{c\tau+d}\right)
\end{multline*}
for all \((\tau,z)\in\pk\times\CC\). 

A weak Jacobi form of weight \(k\) and index \(m\) is a holomorphic function \(\Phi\colon\pk\times\CC\to\CC\) invariant by the action \(\mathop{\Vert_{\raisebox{-2pt}{\(\scriptscriptstyle k,m\)}}}\) of the Jacobi group and whose Fourier expansion has the shape %
\begin{equation}\label{eq_fourier}%
\Phi(\tau,z)=\sum_{n=0}^{+\infty}\sum_{\substack{r\in\Z\\ r^2\leq 4nm+m^2}}c(n,r)\ee^{2\pi\ic(n\tau+rz)}.
\end{equation}
The vector space \(\wJ_{k,m}\) of such functions is finite dimensional. We identify functions on \(\pk\times\CC\) that are not depending on the second variable with functions on \(\pk\). On this subspace, the action \(\mathop{\Vert_{\raisebox{-2pt}{\(\scriptscriptstyle k,0\)}}}\) of \(\sldz^J\) induces the classical modular action of \(\sldz\) denoted by \(\trH{}{k}{}\). The space \(\wJ_{k,0}=\Mod_k\) is the space of holomorphic modular forms of weight \(k\) on \(\sldz\) defined in~\ref{thecaseMF}. 

 The principal object of our study is the bigraded algebra of weak Jacobi forms%
\begin{equation}\label{bigradfwJ}\wJ=\bigoplus_{\substack{k\in 2\Z\\ m\geq 0}}\wJ_{k,m}.\end{equation}

\subsection{Generators  of the algebra of the weak Jacobi forms}\label{eisen}
The algebra \(\wJ\) is a polynomial algebra on two generators over the algebra \(\Mod\) of modular forms. We describe these two generators.

For \(m=0\), we already mentioned in \eqref{M*} that the Eisenstein series \(\E_4\) and \(\E_6\) generate the algebra of modular forms: \(\Mod=\CC[\E_4,\E_6]\). If \(m \neq 0\), the Eisenstein series \(\E_{k, m}\) of \(\wJ_{k, m}\) of even weight \(k \geq 4\) and index \(m\) is
\[\E_{k,m}(\tau,z)=\frac{1}{2}\sum_{\substack{(c,d)\in\Z^2\\(c,d)=1}}\sum_{\lambda\in\Z}(c\tau+d)^{-k}\exp\left(2\ic\pi m\left(\lambda^2\frac{a\tau+b}{c\tau+d}+2\frac{\lambda z-cz^2}{c\tau+d}\right)\right). \]
Let us define %
\[%
\Phi_{10,1}=\textstyle\frac{1}{144}(\E_6\E_{4,1}-\E_4\E_{6,1})\in\wJ_{10,1}, \qquad
\Phi_{12,1}=\textstyle\frac{1}{144}(\E_4^2\E_{4,1}-\E_6\E_{6,1})\in\wJ_{12,1}, %
\]
and %
\[%
\Delta=\frac{1}{1728}(\E_4^3-\E_6^2)\in\Mod_{12}. %
\]
We define in \(\wJ\) the elements 
\begin{equation}\label{def_JacobiAetB}%
\A=\frac{\Phi_{10,1}}{\Delta}\in\wJ_{-2,1}\qquad\text{and}\qquad\B=\frac{\Phi_{12,1}}{\Delta}\in\wJ_{0,1}. %
\end{equation}
By \cite[Theorem 9.3]{MR781735}, we have \[\wJ=\Mod[\A, \B]=\CC[\E_4,\E_6,\A,\B].\]
Using the algorithm proven in~\cite[p. 39]{MR781735}, we can compute the Fourier expansion of \(\Phi_{10,1}\) and \(\Phi_{12,1}\) and deduce the ones of \(\A\) and \(\B\). 
\subsection{A localization of the algebra of the weak Jacobi forms}
We introduce the algebra
\begin{equation}\label{defK}
\wK=\CC[\E_4,\E_6,\A^{\pm 1},\B]\ \supset\wJ.
\end{equation}
This is the localization of \(\wJ\) with respect to the powers of \(\A\). The notions of weight and index naturally extend to \(\wK\) defining a bigraduation%
\begin{equation}\label{defKbigrad}
\wK=\bigoplus_{\substack{k\in 2\Z \\m\in\Z}}\wK_{k,m}.
\end{equation}
We set
\begin{equation}\label{defF2}
\F_2=\B\A^{-1},
\end{equation}
which satisfies
\begin{equation*}
\wK=\CC[\E_4,\E_6,\F_2,\A^{\pm 1}]=\CC[\E_4,\E_6,\F_2][\A^{\pm 1}] 
\end{equation*}
and leads to introduce the subalgebra
\begin{equation*}
\wQ=\CC[\E_4,\E_6, \F_2].
\end{equation*}
The elements of \(\wQ\) appear as the elements in \(\wK\) of index zero. The following table summarizes the weights and indices attached to these different generators.

\begin{center}{\renewcommand{\arraystretch}{1.4}
\begin{tabular}{|c|c|c|c|c|c|}
\hline
\   &   \(\E_4\) &\(\E_6\) &\(\A\) & \(\B\)& \(\F_2\) \\
\hline
weight & 4 & 6 & -2& 0 & 2 \\
\hline
index & 0 &0& 1& 1 & 0\\
\hline
\end{tabular}}
\end{center}

\subsection{Number-theoretic interpretation, relationship with quasimodular forms}
The function \(\F_2\) has a number-theoretic meaning since
\begin{equation}\label{weierstrass}
\F_2=-\frac3{\pi^2}\wp
\end{equation}
where \(\wp\) is the Weierstra\ss{} function \cite[Theorem 3.6]{MR781735} and hence \(\wQ\) is the subalgebra generated by modular forms and the Weierstra\ss{} function 
\begin{equation}\label{eq_extensionWeierstrass}
\wQ=\Mod[\wp].
\end{equation}
Another arithmetical point of view consists in seeing \(\wQ\) as a formal analogue to the algebra \(\QM=\Mod[\E_2]\) of quasimodular forms. The algebra isomorphism involved is 
\begin{equation}\label{defomega}
\omega \colon \wQ \to  \QM, \  P(\E_4,\E_6,\F_2)  \mapsto P(\E_4,\E_6,\E_2).\end{equation}
The degree related to \(\F_2\) of any \(f\in\wQ\) is the depth of the quasimodular form \(\omega(f)\) and we have %
\begin{equation*}
\Mod\subset\QM\simeq\wQ\subset\wK.
\end{equation*}
The isomorphism~\(\omega\) and~\eqref{eq_extensionWeierstrass} emphasize that, from an algebraic point of view, the Weierstra\ss { } function \(\wp\) is similar to the Eisenstein series \(\E_2\).

\subsection{Summary}We can summarize the relationships between the different function algebras under consideration by the following diagram:
\[
\xymatrix{\wJ=\CC[\E_4,\E_6,\A,\B] \ \ar@{^{(}->}[r]                       & \ \wK=\CC[\E_4,\E_6,\F_2,\A^{\pm 1}]     \\
\Mod=\CC[\E_4,\E_6]    \ \ar@{^{(}->}[r] \ar@{^{(}->}[u] &   \ \wQ=\CC[\E_4,\E_6,\F_2]\simeq \QM \ar@{^{(}->}[u]  \\}\ 
\]
This is the framework for our study of formal deformations of \(\wJ\).

\subsection{Problem} Our goal is to construct families of Rankin-Cohen brackets on weak Jacobi forms,
\begin{enumerate}[label=\textnormal{(\roman*)}]
\item which are deformations of the algebra \(\wJ\) (this was not the case for some other construction,
appearing in the literature),
\item which extend Rankin-Cohen brackets on modular forms,  
\item which are coherent with the weight and the index (that is preserve the index and increase the weight by two).
\end{enumerate}
Two main methods can be used following the two points of view illustrated above in the case of modular forms.
\begin{enumerate}[label=\textnormal{(\roman*)}]
\item The first strategy is to start from a derivation of \(\wJ\) and to use the canonical construction of Proposition~\ref{prop_biFRC}. This method gives rise in Section~\ref{firstRC} to a family of deformations of \(\wJ\) extending the Serre-Rankin-Cohen brackets on \(\Mod\) (defined in Section~\ref{SeRCb}).
\item The second one is to apply the extension-restriction process of Theorem~\ref{extrestrict}. So we start from a suitable derivation \(D\) of the extension \(\wK\) of \(\wJ\) which doesn't stabilize \(\wJ\). We thus construct in Section~\ref{secondRC} a family of deformations stabilizing \(\wJ\) and extending the classical Rankin-Cohen brackets on \(\Mod\) (defined in Section~\ref{originalpb}).
\end{enumerate}
\section{A first family of Rankin-Cohen deformations on weak Jacobi forms}\label{firstRC} 
In this section, we define and study a family of Rankin-Cohen deformations on \(\wJ\) whose restriction to \(\Mod\) is the sequence of Serre-Rankin-Cohen brackets. 

\subsection{Construction of the deformations}

We first define an extension of the Serre derivative to the weak Jacobi forms.
For any \(a,b\in\CC\) we denote by \(\Se_{a,b}\) the derivation of \(\wJ=\CC[\E_4,\E_6,\A,\B]\) defined by
\begin{equation}\label{Serabrel}
\Se_{a,b}(\E_4)=-\frac13\E_6, \quad \Se_{a,b}(\E_6)=-\frac12\E_4^2,\quad \Se_{a,b}(\A)=a\B, \quad \Se_{a,b}(\B)=b\E_4\A.\end{equation}
This is by \eqref{Serrel} the only way to extend the Serre derivation \(\Se\) on \(\Mod\) into a derivation of \(\wJ\) preserving the index and increasing the weight by two.
 
We use the general process described in Section~\ref{FRC} to construct formal Rankin-Cohen brackets on \(\wJ\).
\begin{theorem}\label{firstfamily}
For all \((a,b,c)\in\CC^3\), for any \(n\geq 0\), let \(\accol{\cdot,\cdot}^{[a,b,c]}_n\) be the bilinear map from \(\wJ\times\wJ\) to \(\wJ\) defined by bilinear extension of%
\begin{equation}\label{defmu}%
\accol{f,g}^{[a,b,c]}_n=\sum_{r=0}^n(-1)^r\binom{k+cp+n-1}{n-r}\binom{\ell+cq+n-1}{r}\Se_{a,b}^r(f)\Se_{a,b}^{n-r}(g)%
\end{equation}
for all \(f\in\wJ_{k,p}\) and \(g\in\wJ_{\ell,q}\).  Then, 
\begin{enumerate}[label=\textnormal{(\roman*)}]
\item\label{item_eins} The sequence \(\left(\accol{\cdot,\cdot}^{[a,b,c]}_n\right)_{n\geq 0}\) is a formal deformation of \(\wJ\).
\item\label{item_zwei} We have \(\accol{\wJ_{k,p},\wJ_{\ell,q}}^{[a,b,c]}_n\subset\wJ_{k+\ell+2n,p+q}\) for all \(k,\ell\in 2\Z, \ p,q,n\geq 0\).
\item\label{item_drei} The subalgebra of modular forms \(\Mod\) is stable by \(\accol{\cdot,\cdot}^{[a,b,c]}_n\) and its restriction to \(\Mod \times \Mod\) is the Serre-Rankin-Cohen bracket \(\SRC_n\).
\end{enumerate}
\end{theorem}
\begin{proof}
For any \((a,b)\in\CC^2\), the derivation  \(\Se_{a,b}\) of \(\wJ\)  satisfies \(\Se_{a,b}(\wJ_{k,m})\subset\wJ_{k+2,m} \).
Then,~\ref{item_eins} and~\ref{item_zwei} follow from Proposition~\ref{prop_biFRC} since \(\accol{\cdot,\cdot}^{[a,b,c]}_n=\FRC_n^{\Se_{a,b},c}\).
If \(f,g\) are modular forms of respective weights \(k,\ell\), we have \(p=q=0\) in formula \eqref{defmu} which doesn't depend on \(c\) in this case.    
Moreover \(\Se_{a,b}(f)=\Se(f)\) and \(\Se_{a,b}(g)=\Se(g)\) by \eqref{Serabrel} and \eqref{Serrel}. Hence \(\accol{f,g}^{[a,b,c]}_n\) doesn't depend on \((a,b)\)
and is by \eqref{SRCFM} equal to \(\SRC_n(f,g)\).\end{proof}

\subsection{Classification and separation results}\label{classifsep}

The formal deformations \((\accol{\cdot,\cdot}_n^{[a,b,c]})_{n\geq 0}\) depend on three parameters. We can classify them, up to a suitable specialization of the notion of isomorphic deformations (see paragraph \ref{equivfd}) with respect to the arithmetical context.

\begin{definition}\label{modiso}
Two formal deformations \(\left(\accol{\cdot,\cdot}_n^{[a,b,c]}\right)_{n\geq 0}\) and \(\left(\accol{\cdot,\cdot}_n^{[a',b',c']}\right)_{n\geq 0}\) of \(\wJ\) 
are modular-isomorphic if there exists a \(\CC\)-linear bijective map \(\phi\colon\wJ\to\wJ\) such that %
\begin{enumerate}[label=\textnormal{(\roman*)}]
\item\label{item_moddeux} \(\phi(\accol{f,g}_j^{[a,b,c]})=\accol{\phi(f),\phi(g)}_j^{[a',b',c']}\) for all \(j\geq 0\) and \(f,g\in\wJ\).
\item\label{item_modun} \(\phi\) preserves the index and the weight of homogeneous weak Jacobi forms %
\end{enumerate}
\end{definition}
In particular \(\phi\) is a \(\CC\)-algebra automorphism of \(\wJ\) and a Poisson isomorphism from \((\wJ,\accol{\cdot,\cdot}_1^{[a,b,c]})\) to \((\wJ,\accol{\cdot,\cdot}_1^{[a',b',c']})\).

\begin{lemma}\label{CNseparation}
If two formal deformations \((\accol{\cdot,\cdot}_n^{[a,b,c]})_{n\geq 0}\) and \((\accol{\cdot,\cdot}_n^{[a',b',c']})_{n\geq 0}\) are modular-isomorphic, then \(c=c'\), and there exists \(\xi\in\CC^*\) such that \(a'=\xi a\) and \(b'=\xi^{-1}b\).
\end{lemma}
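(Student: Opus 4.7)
The plan is to exploit the bigraded structure: a modular-isomorphism must act on each homogeneous component, and most components containing generators are one-dimensional, so $\phi$ is forced to be diagonal on generators. Comparing the first brackets on pairs of generators will then pin down the parameters.

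First, I would observe that condition~\ref{item_modun} together with condition~\ref{item_moddeux} applied for $j=0$ (where $\accol{\cdot,\cdot}_0$ is multiplication) force $\phi$ to be a bigraded $\CC$-algebra automorphism of $\fwJ=\CC[\E_4,\E_6,\A,\B]$. Counting monomials $\E_4^a\E_6^b\A^c\B^d$ with prescribed bidegree, one checks that
\[
\wJ_{4,0}=\CC\E_4,\quad\wJ_{6,0}=\CC\E_6,\quad\wJ_{-2,1}=\CC\A,\quad\wJ_{0,1}=\CC\B,
\]
so there exist nonzero complex numbers $x,y,z,t$ with $\phi(\E_4)=x\E_4$, $\phi(\E_6)=y\E_6$, $\phi(\A)=z\A$, $\phi(\B)=t\B$.

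Next I would apply condition~\ref{item_moddeux} for $j=1$ to pairs of generators, using the explicit formula
\[
\accol{f,g}_1^{[a,b,c]}=(k+cp)f\Se_{a,b}(g)-(\ell+cq)\Se_{a,b}(f)g
\]
for $(f,g)\in\wJ_{k,p}\times\wJ_{\ell,q}$ and the values of $\Se_{a,b}$ on the generators recalled in~\eqref{serreJ}. Starting from the pair $(\E_4,\E_6)$ the bracket reads $-2\E_4^3+2\E_6^2$ and is independent of $(a,b,c)$; comparing coefficients of $\E_4^3$ and $\E_6^2$ in $\phi\accol{\E_4,\E_6}_1^{[a,b,c]}$ and in $\accol{\phi(\E_4),\phi(\E_6)}_1^{[a',b',c']}$ gives $x^3=xy$ and $y^2=xy$, whence $x=y=1$.

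Then I would run the same comparison on $(\E_4,\A)$ and $(\E_4,\B)$. Computing
\[
\accol{\E_4,\A}_1^{[a,b,c]}=4a\E_4\B+\tfrac{c-2}{3}\E_6\A,\qquad\accol{\E_4,\B}_1^{[a,b,c]}=4b\E_4^2\A+\tfrac{c}{3}\E_6\B,
\]
identifying coefficients in the bases $\{\E_4\B,\E_6\A\}$ of $\wJ_{4,1}$ and $\{\E_4^2\A,\E_6\B\}$ of $\wJ_{6,1}$ yields $at=a'z$, $(c-2)=(c'-2)$, $bz=b't$ and $c=c'$. Setting $\xi=t/z\in\CC^*$ one reads off $a'=\xi a$ and $b'=\xi^{-1}b$ and $c=c'$, which is the claim. (A verification on the remaining generator pairs $(\E_6,\A)$, $(\E_6,\B)$, $(\A,\B)$ only reproduces these relations, so no further constraint appears.) The only subtle point is the preliminary identification of the relevant homogeneous components as one-dimensional; everything after that is a short linear comparison, and there is no real obstacle.
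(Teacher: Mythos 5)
Your proof is correct, and the overall skeleton matches the paper's: both first use preservation of the bigrading (and the one-dimensionality of the components \(\wJ_{4,0}\), \(\wJ_{6,0}\), \(\wJ_{-2,1}\), \(\wJ_{0,1}\)) to make \(\phi\) diagonal on the generators, and both use the pair \((\E_4,\E_6)\) to force \(\phi\) to be the identity on \(\fMod\). Where you diverge is in how the remaining relations are extracted. The paper first proves the intertwining identity \(\phi\circ\Se_{a,b}=\Se_{a',b'}\circ\phi\) by bracketing against \(\Delta\), using that \(\ker\Se=\CC[\Delta]\) and \(\phi(\Delta)=\Delta\); it then gets \(c=c'\) from the resulting general identity \((c'-c)\bigl(pf\Se_{a,b}(g)-qg\Se_{a,b}(f)\bigr)=0\) evaluated at \(f=\A\E_4\), \(g=\E_6\), and reads off \(a'\), \(b'\) by applying the intertwining relation to \(\A\) and \(\B\). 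You instead compute the first brackets \(\accol{\E_4,\A}_1^{[a,b,c]}\) and \(\accol{\E_4,\B}_1^{[a,b,c]}\) explicitly and compare coefficients in the two-dimensional spaces \(\wJ_{4,1}\) and \(\wJ_{6,1}\); your formulas and the resulting relations \(at=a'z\), \(bz=b't\), \(c=c'\) all check out. Your route is more elementary in that it avoids invoking the kernel of the Serre derivative, at the cost of being purely computational; the paper's route produces the reusable commutation relation \(\phi\circ\Se_{a,b}=\Se_{a',b'}\circ\phi\), which is the conceptually cleaner intermediate and is essentially the statement your coefficient identities encode generator by generator.
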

\begin{proof}
Let \(\phi\colon\wJ\to\wJ\) be as in Definition~\ref{modiso}. It induces a Poisson isomorphism from \(\left(\wJ,\accol{\cdot,\cdot}_1^{[a,b,c]})\right)\) to \(\left(\wJ,\accol{\cdot,\cdot}_1^{[a',b',c']}\right)\). We have \(\phi\left(\Mod\right)\subset\Mod\) by~\ref{item_modun} and \(\RC_1=\SRC_1=\accol{\cdot,\cdot}_1^{[a,b,c]}\) on \(\Mod\). Then, the restriction of \(\phi\) to \(\Mod\) is a Poisson modular automorphism of \(\left(\Mod,\RC_1\right)\). By~\cite[Proposition 7]{DR}, this is the identity of \(\Mod\).

Let \(f\in\wJ_{k,p}\). Then, \(\phi(f)\in\wJ_{k,p}\). The restriction of \(\Se_{a,b}\) to \(\Mod\) is \(\Se\). The kernel of \(\Se\) is \(\CC[\Delta]\) (see, for example, \cite[Proposition 8]{DR}) and \(\phi(\Delta)=\Delta\). We deduce that %
\[%
\phi\left(\accol{f,\Delta}_1^{[a,b,c]}\right)=-12\Delta\phi\left(\Se_{a,b}(f)\right)\quad \text{ and }\quad \accol{\phi(f),\phi(\Delta)}_1^{[a',b',c']}=-12\Delta\Se_{a',b'}\left(\phi(f)\right) %
\]%
and hence %
\begin{equation}\label{commute}
\phi\circ\Se_{a,b}=\Se_{a',b'}\circ\phi. 
\end{equation}
It follows that, for all \(f\in\wJ_{k,p}\) and \(g\in\wJ_{\ell,q}\), we have%
\[%
\accol{\phi(f),\phi(g)}_1^{[a',b',c']}=\phi\left((k+c'p)f\Se_{a,b}(g)-(\ell+c'q)g\Se_{a,b}(f)\right)
\]
and~\ref{item_moddeux} leads to%
\[%
(c'-c)\left(pf\Se_{a,b}(g)-qg\Se_{a,b}(f)\right)=0.
\]
We apply this equality to \(f=\A\E_4\) and \(g=\E_6\) to obtain \(c'=c\). Let \(\lambda\) and \(\mu\) be defined in \(\CC^*\) by \(\phi(\A)=\lambda\A\) and \(\phi(\B)=\mu\B\). Equation~\eqref{commute} applied to \(\A\) and \(\B\) gives \(a\mu=a'\lambda\) and \(b'\mu=b\lambda\). The proof is complete setting \(\xi=\mu/\lambda\). %
\end{proof}

\begin{theorem}\label{classification}
Let \((a,b,c)\in\CC^3\). The formal deformation \((\accol{\cdot,\cdot}_n^{[a,b,c]})_{n\geq 0}\) of \(\wJ\) is modular-isomorphic to one of the following formal deformations, %
\begin{enumerate}[label=\textnormal{(\arabic*)}]
\item The formal deformation \((\accol{\cdot,\cdot}_n^{[1,b',c']})_{n\geq 0}\) for some \((b',c')\in\CC^2\). 
\item The formal deformation \((\accol{\cdot,\cdot}_n^{[0,1,c']})_{n\geq 0}\) for some \(c'\in\CC\).
\item The formal deformation \((\accol{\cdot,\cdot}_n^{[0,0,c']})_{n\geq 0}\) for some \(c'\in\CC\).
\end{enumerate}
These deformations are pairwise non modular-isomorphic for different values of the parameters. %
\end{theorem}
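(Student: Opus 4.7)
The strategy is to combine Lemma~\ref{CNseparation}, which provides necessary conditions for two deformations \((\accol{\cdot,\cdot}_n^{[a,b,c]})_n\) and \((\accol{\cdot,\cdot}_n^{[a',b',c']})_n\) to be modular-isomorphic, with an explicit construction of algebra automorphisms of \(\fwJ\) realising those identifications. The non-isomorphism half of the statement will then follow by reading the lemma in reverse; the substantive work lies in exhibiting, for each \((a',b',c') \in \CC^3\), a modular-isomorphism to one of the three listed normal forms.

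The first step is to pin down the candidate automorphisms. The argument at the beginning of the proof of Lemma~\ref{CNseparation} already shows that any modular-isomorphism \(\phi\) fixes \(\E_4\) and \(\E_6\), hence fixes \(\fMod\) pointwise. A monomial count in \(\CC[\E_4,\E_6,\A,\B]\) using the bidegrees from Table~\ref{tab_weightindex} gives \(\wJ_{-2,1} = \CC\A\) and \(\wJ_{0,1} = \CC\B\), so \(\phi\) must be the algebra automorphism extending \(\phi(\A) = \alpha\A\) and \(\phi(\B) = \beta\B\) for some \((\alpha,\beta) \in \CC^{*2}\). A direct check on the four generators shows that \(\phi \circ \Se_{a,b} = \Se_{a',b'} \circ \phi\) if and only if \(a' = (\beta/\alpha)\,a\) and \(b' = (\alpha/\beta)\,b\). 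The conceptually cleanest step, which I expect to be the main (though modest) obstacle, is the observation that each bracket \(\accol{\cdot,\cdot}_n^{[a,b,c]}\) is built uniformly from \(\Se_{a,b}\), the product on \(\fwJ\), and binomial coefficients depending only on the bigrading; hence, once \(\phi\) preserves weight and index and intertwines \(\Se_{a,b}\) with \(\Se_{a',b'}\), one obtains
\[
\phi\bigl(\accol{f,g}_n^{[a,b,c]}\bigr) = \accol{\phi(f),\phi(g)}_n^{[a',b',c]}
\]
for all \(n \in \N\) and all homogeneous \((f,g)\), provided \(c\) is kept unchanged. This lifts the intertwining from the Poisson level \(n=1\) to every order simultaneously.

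For the reduction I would split on \((a',b')\): if \(a' \neq 0\), the choice \(\alpha = 1\), \(\beta = a'\) exhibits a modular-isomorphism from \((\accol{\cdot,\cdot}_n^{[1,a'b',c']})_n\) onto \((\accol{\cdot,\cdot}_n^{[a',b',c']})_n\), placing the deformation in case~(i); if \(a' = 0\) and \(b' \neq 0\), the choice \(\alpha = b'\), \(\beta = 1\) yields case~(ii); and the remaining case \(a' = b' = 0\) is already of the form~(iii). For the pairwise non-isomorphism assertion, Lemma~\ref{CNseparation} forces immediately \(c = c'\) and \((a',b') = (\xi a,\xi^{-1}b)\) for some \(\xi \in \CC^*\) as soon as a modular-isomorphism exists. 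The vanishing patterns of \(a\) and \(b\) are therefore invariants, distinguishing the three families pairwise, and within family~(i) the equation \(\xi\cdot 1 = 1\) pins down \(\xi = 1\) and hence \(b = b'\), while families~(ii) and~(iii) are each rigid in the parameter \(c\).
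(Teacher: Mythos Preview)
Your proof is correct and follows essentially the same route as the paper: define the scaling automorphisms \(\phi\) fixing \(\E_4,\E_6\) and sending \(\A\mapsto\alpha\A\), \(\B\mapsto\beta\B\), check that \(\phi\circ\Se_{a,b}=\Se_{a',b'}\circ\phi\) amounts to \(a'=(\beta/\alpha)a\) and \(b'=(\alpha/\beta)b\), observe this lifts to all brackets \(\accol{\cdot,\cdot}_n^{[a,b,c]}\) since they are built from \(\Se_{a,b}\) and bigrading data, and then use Lemma~\ref{CNseparation} for the separation. Your write-up is somewhat more explicit than the paper's (you spell out why \(\phi\) must be of scaling form and why the intertwining propagates to all \(n\)), but the argument is the same.
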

\begin{proof}
For any \((\lambda,\mu)\in\CC^{*2}\), denote by \(\phi_{\lambda,\mu}\) the \(\CC\)-algebra automorphism of \(\wJ\) fixing \(\E_4\) and \(\E_6\) and 
such that \(\phi_{\lambda,\mu}(\A)=\lambda\A\) and \(\phi_{\lambda,\mu}(\B)=\mu\B\). By the same calculations as in proof of lemma \ref{CNseparation} we have, for any \((a,b,a',b')\in\CC^4\),%
\begin{equation}\label{CSiso}
(\phi_{\lambda,\mu}\circ\Se_{a,b}=\Se_{a',b'}\circ\phi_{\lambda,\mu})\ \text{ if and only if } (a\mu=a'\lambda \ \text{and } b\lambda=b'\mu). 
\end{equation}
By \eqref{defmu}, if one of the equivalent conditions of \eqref{CSiso} is satisfied, then the formal deformations \(\accol{\cdot,\cdot}^{[a,b,c]}\) and \(\accol{\cdot,\cdot}^{[a',b',c']}\) are isomorphic.  Since it follows from~\eqref{CSiso} that %
\(\phi_{a,1}\circ\Se_{a,b}=\Se_{1,ab}\circ\phi_{a,1}\) for any \(a\not=0\), and \(\phi_{1,b}\circ\Se_{0,b}=\Se_{0,1}\circ\phi_{1,b}\) for any \(b\not=0,\)
the proof that \((\accol{\cdot,\cdot}_n^{[a,b,c]})_{n\geq 0}\) is modular-isomorphic to one of given formal deformations is complete. The separation of the different cases up to modular isomorphism follows from a direct application of Lemma~\ref{CNseparation}.
\end{proof}

\subsection{Oberdieck's derivation and a new way to build Jacobi forms}

The extended Serre derivation \(\Se_{a,b}\) is a generalization of an already known derivation of the algebra of weak Jacobi forms that preserves the index, increases the weight by \(2\), and has an analytic expression: the Oberdieck derivation~\cite{Ober12}. For the convenience of the reader, we only briefly describe this derivation here. The interested reader will find details in the unpublished note~\cite{CDMROber}.

For all \(\tau\in\pk\), let \(\Lambda_\tau=\Z\oplus\tau\Z\). The \(\zeta\) function associated to \(\Lambda_\tau\) is defined by %
\begin{equation}\label{eq_zetaWeier}%
\forall z\in\CC-\Lambda_\tau\qquad\zeta(\tau,z)=\frac{1}{z}+\sum_{\substack{\omega\in\Lambda_\tau\\\omega\neq0}}\left(\frac{1}{z-\omega}+\frac{1}{\omega}+\frac{z}{\omega^2}\right).
\end{equation}

Let \(\J_1\) and \(\J_2\) be the two functions defined by%
\[%
\forall\tau\in\pk,\,\forall z\in\CC,\, z\notin\Z+\tau\Z\qquad\J_1(\tau,z)=\frac{1}{2\pi\ic}\zeta(\tau,z)+\frac{\pi\ic}{6}z\E_2(\tau),
\]
and %
\[%
\J_2=\frac{1}{2\pi\ic}\partial_z\J_1-\frac{1}{12}\E_2+\J_1^2%
\]
where \(\partial_z\) is the derivative with respect to the second variable.

We define an application on \(\wJ\) by%
\[%
\Ob(f)=\frac{1}{2\pi\ic}\partial_\tau(f)-\frac{k}{12}\E_2 f-\frac{1}{2\pi\ic}\J_1\partial_z(f)+m\J_2f %
\]%
for any \(f\) in the space \(\wJ_{k,m}\). It can be shown that this application is a derivation of \(\wJ\) satisfying \(\Ob\left(\wJ_{k,m}\right)\subset\wJ_{k+2,m}\). Computing the values of \(\Ob\) at \(\E_4\), \(\E_6\), \(\A\) and \(\B\), we see that \(\Ob=\Se_{-1/6,-1/3}\). %

The existence of an analytic expression for \(\Ob\) provides an explicit way to build new Jacobi forms from old ones. For examples, \(\E_6\) is obtained from \(\E_4\) by \(\Ob\) and \(\B\) is obtained from \(\A\) by \(\Ob\).

\subsection{Relationship with deformations on quasimodular forms}
The deformations we have built on the algebra of weak Jacobi forms  produce deformations on the algebra of quasimodular forms. 
More precisely they extend from \(\wJ\) to \(\wK\) and then, restrict  to \(\wQ=\CC[\E_4,\E_6,\F_2]\). 
For any \(a,b\in\CC\), the derivation \(\Se_{a,b}\) extends canonically to \(\wK\) by%
\[%
\Se_{a,b}(\A^{-1})=-\A^{-2}\Se_{a,b}(\A)=-a\A^{-2}\B.%
\]
This implies by \eqref{defF2}
\begin{equation*}\label{serreF2}
\Se_{a,b}(\F_2)=b\E_4-a\F_2^2.
\end{equation*}
It follows that the algebra \(\wQ\) is stable by \(\Se_{a,b}\). Hence \(\accol{f,g}_n^{[a,b,c]}\in\wQ\) for all \(n\geq 0\) and \(f,g\in\wQ\).
These expressions do not depend on \(c\) since the functions of \(\wQ\) have index \(0\). We denote simply 
\begin{equation}\label{defmuquasi}%
\accol{f,g}^{[a,b]}_n=\sum_{r=0}^n(-1)^r\binom{k+n-1}{n-r}\binom{\ell+n-1}{r}\Se_{a,b}^r(f)\Se_{a,b}^{n-r}(g)%
\end{equation}
for all homogeneous \(f,g\in\wQ\) of respective weights \(k\) and \(\ell\).  This defines a deformation of \(\wQ\).
We consider three cases, %
\begin{enumerate}[label=\textnormal{(\arabic*)}]
\item If \(a=b=0\), the deformation \((\accol{f,g}^{[0,0]}_n)_{n\geq 0}\) of \(\wQ\) corresponds up the isomorphism \(\omega\) defined in \eqref{defomega} between \(\wQ\) and \(\QM\) to the deformation of \(\QM\) studied in \cite[Theorem D]{DR} for the particular case \(b=0\) and \(\alpha=-\frac23\) of the parameters used in this theorem.
\item If \(a=0\) and \(b\neq 0\),  we can by Theorem~\ref{classification} reduce to the deformation \((\accol{f,g}^{[0,-1/12]}_n)_{n\geq 0}\) of \(\wQ\) which corresponds up the isomorphism \(\omega\) to the deformation of \(\QM\) studied in \cite[Theorem B]{DR} for the particular case \(a=0\) of the parameter used in this theorem.
\item\label{item_Quut} If \(a\neq 0\), the deformation of \(\QM\) obtained from \((\accol{f,g}^{[a,b]}_n)_{n\geq 0}\) through the isomorphism \(\omega\) does not correspond to any bracket defined in~\cite{DR}.  The reason is that the study of \cite{DR} was devoted to deformations preserving the depth of the quasimodular forms. This is not the case here since for instance
\[
\accol{\E_4,\F_2}_1^{[a,b]}=4b\E_4^2+\frac{2}{3}\E_6\F_2-4a\E_4\F_2^2
\]
is of \(\F_2\)-degree two whereas \(\E_4\) and \(\E_2\) are of respective depth 0 and 1 in \(\QM\).
\end{enumerate}


\section{A second family of Rankin-Cohen deformations on weak Jacobi forms}\label{secondRC}
In this section, we define and study a family  of Rankin-Cohen deformations on \(\wJ\)  whose restriction to \(\Mod\) is the sequence of classical Rankin-Cohen brackets. The method consists in applying the extension-restriction principle described in Paragraph \ref{ERMethod} to some family of  formal deformations of \(\wK\).
\subsection{Construction of the deformations}
Recall that \(\omega\colon\wQ\to\QM\) is the algebra isomorphism that sends \((\E_4,\E_6,\F_2)\) to \((\E_4,\E_6,\E_2)\). The usual complex derivative \(\Dz\) defines a derivation on the algebra \(\QM\) of quasimodular forms. We define a derivation on \(\wQ\) by \(\partial=\omega^{-1}\Dz\omega\). Ramanujan equations~\eqref{Ramanujan} become%
\begin{equation}\label{defpartialu}%
\partial(\E_4)=-\frac{1}{3}(\E_6-\E_4\F_2),\quad\partial(\E_6)=-\frac{1}{2}(\E_4^2-\E_6\F_2),\quad\partial(\F_2)=-\frac{1}{12}(\E_4-\F_2^2).
\end{equation}
The unique way to extend \(\partial\) into a  derivation \(\partial_u\) of \(\wK\) preserving the index and increasing the weight by 2 is to set
\begin{equation}\label{defpartialu2}
\forall f\in\wQ\quad\partial_u(f)=\partial(f) \text{ and } \partial_u(\A)={u}\A\F_2
\end{equation}
for some \(u\in\CC\). We compute
\begin{equation}\label{defpartialu3}
\partial_u(\B)=\partial_u(\A\F_2)=(u+\frac1{12})\B\F_2-\frac1{12}\E_4\A.
\end{equation}
For any \(u\in\CC\), the derivation \(\partial_u\) does not restrict into a derivation of \(\Mod\) nor into a derivation of \(\wJ\).

Applying the process described Section~\ref{FRC}, we define, using \(\partial_u\), the following deformation of \(\wK\). 

\begin{theorem}\label{thm3}
For any complex parameters \(u\) and \(v\), let \(\left(\CCrochet{\cdot,\cdot}^{u,v}_n\right)_{n\geq 0}\) be the sequence of maps \(\wK\times\wK\to\wK\) defined by bilinear extension of%
\begin{equation}\label{defnu}
\CCrochet{f,g}^{u,v}_n=\sum_{r=0}^n(-1)^r\binom{k+vp+n-1}{n-r}\binom{\ell+vq+n-1}{r}\partial_u^r(f)\partial_u^{n-r}(g), %
\end{equation}
for all homogeneous \(f\in\wK_{k,p}\) and \(g\in\wK_{\ell,q}\). Then, for all \((u,v)\in\CC^2\),%
\begin{enumerate}[label=\textnormal{(\roman*)}]
\item\label{item_uks} The sequence \(\left(\CCrochet{\cdot,\cdot}^{u,v}_n\right)_{n\geq 0}\) is a formal deformation of \(\wK\).
\item\label{item_kaks} \(\CCrochet{\wK_{k,p},\wK_{\ell,q}}_n^{u,v}\subset\wK_{k+\ell+2n,p+q}\).
\item\label{item_kolm} The subalgebra \(\Mod\) of modular forms is stable by \(\CCrochet{\cdot,\cdot}^{u,v}_n\), its restriction to \(\Mod\) being the classical Rankin-Cohen bracket  \(\RC_n\).
\end{enumerate}
\end{theorem}
\begin{proof}
For any \(u\in\CC\), the derivation \(\partial_u\) of \(\wK\) satisfies \(\partial_u\left(\wK_{k,m}\right)\subset\wK_{k+2,m}\). Then,~\ref{item_uks} and~\ref{item_kaks} follow from Proposition~\ref{prop_biFRC} since \(\CCrochet{\cdot,\cdot}^{u,v}_n=\FRC_n^{\partial_u,v}\). If \(f,g\) are modular forms of respective weights \(k,\ell\), we have \(p=q=0\) in formula \eqref{defnu} which doesn't depend on \(v\) in this case.    
Moreover \(\partial_u(f)=\partial(f)=D_z(f)\) and \(\partial_u(g)=\partial(g)=D_z(g)\). Hence \(\CCrochet{f,g}^{[u,v]}_n\) doesn't depend on \((u,v)\)
and is by \eqref{classicalRC} equal to \(\RC_n(f,g)\), which proves~\ref{item_kolm}.\end{proof}

\begin{remark}[classification and separation result]
Using arguments similar to those in \ref{classifsep}, we can show that two formal deformations \((\CCrochet{\cdot,\cdot}_n^{u,v})_{n\geq 0}\) and \((\CCrochet{\cdot,\cdot}_n^{u',v'})_{n\geq 0}\) of \(\wK\) are modular isomorphic if and only if \((u,v)=(u',v')\). The details of the proof are left to the reader.
\end{remark}

\begin{remark}[relationship with deformations on quasimodular forms]\label{rem_vc}
The subalgebra \(\wQ\) is stable by the brackets \(\CCrochet{\cdot,\cdot}^{u,v}_n\). However, their restrictions  to \(\wQ\) do not preserve the degree in \(\F_2\)  (so up to the isomorphism \(\omega\), they do not preserve the depth of quasimodular forms). For this reason, the restrictions of  \(\CCrochet{\cdot,\cdot}^{u,v}_n\) to the subalgebra \(\wQ\) can not coincide with the brackets previously studied in \cite{DR}.
\end{remark}


\subsection{Restriction to weak Jacobi forms}
Although the subalgebra \(\wJ\) is not stable by the derivation \(\partial_u\), the question arises whether \(\wJ\) can be stable by \(\left(\CCrochet{\cdot,\cdot}^{u,v}_n\right)_{n\geq 0}\) for some values of the parameters \(u\) and \(v\). Since
\[
\CCrochet{\B,\E_4}_1^{u,v}=\frac13\left(v-(12u+1)\right)\B\E_4\F_2-\frac13v\E_6\B+\frac13\E_4^2\A,
\]
a necessary condition is \(v=12u+1\). We use the general method of Theorem~\ref{extrestrict} to prove that this condition is sufficient.

\begin{theorem}\label{thm4}
For any complex number \(u\), the sequence \(\left(\CCrochet{\cdot,\cdot}^{u,12u+1}_n\right)_{n\geq 0}\) defines by restriction a deformation of the algebra \(\wJ\) of weak Jacobi forms, whose restriction to the subalgebra \(\Mod\) of modular forms is the sequence of classical Rankin-Cohen brackets. %
\end{theorem}

\begin{proof}
We consider the derivation \(\Delta_u\) of \(\wJ\) defined by \(\Delta_u(f)=\frac12(k+(12u+1)p)f\) for any weak Jacobi form \(f\) of weight \(k\) and index \(p\).
We have
\[\Delta_u(\E_4)=2\E_4, \quad\Delta_u(\E_6)=3\E_6,\quad \Delta_u(\A)=\left(6u-\frac12\right)\A, \quad  \Delta_u(\B)=\left(6u+\frac12\right)\B.\]
We also denote by \(\Delta_u\) its canonical extension as a derivation of \(\wK\), which satisfies:
\[\Delta_u(\F_2)=\Delta_u(\B)\A^{-1}-\B\Delta_u(\A)\A^{-2}=\F_2.\] 
We denote \(\theta=\Se_{\frac 1{12}, -\frac 1{12}}\) in the sense of \eqref{Serabrel}. So we have
\[\theta(\E_4)=-\frac13\E_6,\quad \theta(\E_6)=-\frac12\E_4^2,\quad \theta(\A)=\frac1{12}\B,\quad \theta(\B)=-\frac1{12}\E_4\A,\]
and its canonical extension as a derivation of \(\wK\) satisfies:
\[\theta(\F_2)=\theta(\B)\A^{-1}-\B\theta(\A)\A^{-2}=-\frac1{12}\F_2^2-\frac1{12}\E_4.\]
The derivations \(\Delta_u\) and \(\theta\) of \(\wK\) satisfy \(\Delta_u\theta-\theta\Delta_u=\theta,\) and by construction the subalgebra \(\wJ\) of \(\wK\) is stable par \(\Delta_u\) et \(\theta\).
The element \(h=-\frac1{144}\E_4\) of \(\wJ\) satisfies \(\Delta_u(h)=2h\). Then, the element \(x=\frac1{12}\F_2\) of \(\wK\) satisfies
\(x\notin\wJ\), \(\Delta_u(x)=x\) and \(\theta(x)=-x^2+h\). Thus we can apply Theorem~\ref{extrestrict}.
The derivation \(D:=\theta+2x\Delta_u\) of \(\wK\) is such that \(\Delta_u D-D\Delta_u=D\) and
\(\wJ\) is stable by the Connes-Moscovici deformation \((\CCM_n^{D,\Delta_u})_{n\geq 0}\) of \(\wK\).

On the one hand, \(D(\wK_{k, p}) \subset \wK_{k+2, p}\). Then \((\CCM_n^{D,\Delta_u})_{n\geq 0} = (\FRC_n^{D,12u+1})_{n\geq 0}\) on \(\wK\) by Proposition~\ref{prop_biFRC}. Hence we have for any \(f\in \wJ_{k,p}\) and \(g\in\wJ_{\ell,q}\),
\begin{multline*}%
\CCM_n^{D,\Delta_u}(f,g)=\\\sum_{r=0}^n(-1)^r\binom{k+(12u+1)p+n-1}{n-r}\binom{\ell+(12u+1)q+n-1}{r}D^r(f)D^{n-r}(g).%
\end{multline*}
On the other hand, the calculation of the values of \(D\) on the generators of \(\wK\) gives
\[
D(\E_4)=\frac13(\E_4\F_2-\E_6),\, D(\E_6)=\frac12(\E_6\F_2-\E_4^2),\, D(\A)=u\A\F_2,\, D(\F_2)=\frac1{12}(\F_2^2-\E_4).
\]
We deduce by \eqref{defpartialu} and \eqref{defpartialu2} that \(D\) is equal to the derivation \(\partial_u\) 
and then, by \eqref{defnu} that
\[%
(\CCM_n^{D,\Delta_u})_n=([[{\cdot,\cdot}]]^{u,12u+1}_n)_n.
\]
We conclude that \(\wJ\) is stable by the formal deformation \(([[{\cdot,\cdot}]]^{u,12u+1}_n)_n\) of \(\wK\).

The fact that the restriction to \(\Mod\) of this deformation of \(\wJ\) is the sequence of classical Rankin-Cohen brackets follows from~\ref{item_kolm} of Theorem~\ref{thm3}.
\end{proof}
\bibliographystyle{plain}
\bibliography{ExtNewJacobi}
\end{document}